\documentclass[12 pt]{article}
\usepackage{amsmath}
\usepackage{amssymb}
\usepackage{amsthm}

\textheight=23 cm
\textwidth=15 cm
\newtheorem{Theorem}{Theorem}
\newtheorem{Lemma}{Lemma}
\newtheorem{Corollary}{Corollary}
\newtheorem{Proposition}{Proposition} 
\newtheorem{Remark}{Remark}
\sloppy

\begin{document}

\title{On the accuracy of the poissonisation in the infinite occupancy scheme}
\author{Mikhail Chebunin\thanks{E-mail: chebuninmikhail@gmail.com, Novosibirsk State University, Novosibirsk, Russia. The research was supported by RFBR grant 17-01-00683}}
\date{}
\maketitle

\begin{abstract}
We obtain asymptotic accuracy of the poissonisation in the infinite occupancy scheme.
All results are obtained for integer-valued random variables having a regularly varying distribution.
\end{abstract}

Keywords: infinite urn/cell scheme, asymptotic upper bounds, regular variation.

\section{Introduction}

We consider a model with $n$ balls and infinitely many cells ("urns") numbered $1, 2, \ldots$. 
Ball $j=1,2,\dots,n$ is randomly thrown to cell $X_j, \ {\mathbb P} (X_j=i)=p_i>0, \ \sum_{i=1}^\infty p_i=1$, independently of everything else.
Denote by  $J_i(n)=\sum_{j=1}^{n} {\mathbb I}(X_j=i)$ the total number of balls in cell $i$.
Let 
\begin{equation}\label{Rn}
R^{*}_{n,k}=\sum\limits_{i=1}^{\infty} {\mathbb I} (J_i(n)\ge k)
\end{equation}
 be the number of cells containing at least $k\geq 1$ balls,
$ R_{n,k}=R^{*}_{n,k}-R^{*}_{n,k+1}$  the number of cells with exactly  $k$ balls, and  assume $p_1\geq p_2 \geq \ldots $.

Karlin (1967) has obtained pioneering results in the study of this model. We recall here a number of his results.
It seems that he was the first who introduced the "poissonisation" procedure in this content. 
Namely, instead of fixed-size sampling he considered  samples of random size $P(n)$. Where
$\{P (t), \ t \ge 0 \}$ is a Poisson process with intensity one that does not depend on the procedure of assigning cells to balls.
 According to the well-known splitting property of Poisson flows, random processes $\{J_i(P(t))\stackrel{def}{=}P_i(t), \ t\geq 0\}$ are Poisson with intensities $p_i, \ i=1,2,\dots,$ and mutually independent for different $i$. 
 From (\ref{Rn}),
\[
R^{*}_{P(t),k}=\sum_{i=1}^{\infty} {\mathbb I}(P_i(t)\geq k) \ \ \textrm{and} \ \ R_{P(t),k}=\sum_{i=1}^{\infty} {\mathbb I}(P_i(t)= k).
\]
Let $ \alpha(x)=\max\{j\ : \ \ p_j\geq 1/x\}$ and assume the function $ \alpha (x) $ to be regularly varying at infinity,
\begin{equation}\label{reg}
\alpha(x)=x^{\theta} L(x) \ \  \textrm{with} \ \  \theta\in [0,1],
\end{equation}
  where $L(x)$ is a function slowly varying at infinity. 
Clearly, $L(t)\to 0$ as $t\to \infty$, if $\theta=1$. 
Lemma 4 of Karlin showed that function 
\[
L^{*}(t)\stackrel{def}{=}\int_0^{\infty}\frac{e^{-1/y}}{y}L(t y)dy\to 0 \ \ \textrm{as} \ \ t\to\infty
\]
is slow varying, too.  Let, for $k\geq 1$, 
\[
Y_{n,k}^{*} =R^{*}_{n,k}-{\mathbb E} R^{*}_{n,k}, 
\ \ \ \ Y_{n,k}=R_{n,k}-{\mathbb E} R_{n,k}, 
\]
\[
B_{n,k}^{*} ={\mathbb Var}R^*_{P(n),k}, \ \ \ \  B_{n,k}={\mathbb Var}R_{P(n),k}
\]
and let $R_n\stackrel{def}{=}R^{*}_{n,1}=\sum_{k\ge 1} R_{n,k}$ be the number of non-empty cells. 
Karlin has established a number of asymptotic properties of random variables $R_n$ as $n\to\infty$, including the Strong Law of Large Numbers (SLLN) and the asymptotic normality in the range $ \theta \in (0,1] $, and also the asymptotic normality of random vector  $(R_{n,1}, ..., R_{n,k})$, $k\ge 1$ when  $\theta\in(0,1)$.
The proof of normality was based on the following convergences: as $n\to\infty$
\begin{equation}\label{*}
 {\mathbb E}R_n-{\mathbb E}R_{P(n)}\to 0
\end{equation}
and under condition (\ref{reg}), for any fixed $c_0>0$, $\theta\in(0,1]$ and $c_1$
\begin{equation}\label{**}
 \sup_{|c|\le c_0}\frac{|{\mathbb E}R_{[n+c\sqrt{n}]}-{\mathbb E}R_n|}{\sqrt{ B_{n,1}^*}}\to 0,
\end{equation}
\begin{equation}\label{***}
 \frac{R_{[n+c_1\sqrt{n}]}-R_n}{\sqrt{ B_{n,1}^*}}\xrightarrow[]{\textrm{p}} 0.
\end{equation}

Dutko (1989) has proved the asymptotic normality of $R_n$ under a weaker assumption. Namely, he replaced regular condition   (\ref{reg}) by the following: 
\begin{equation}\label{nreg}
 B_{n,1}^* \to \infty \ \ \textrm{as} \ \ n\to\infty.
\end{equation}
For the regularly varying tails, condition (\ref{reg}) holds for any $ \theta \in (0,1]$ and may also hold for $ \theta = 0 $ in a particular case. 
Dutko did not assume condition (\ref{reg}) in his proofs of  (\ref{**}) and (\ref{***}).

Gnedin, Hansen and Pitman (2007) have studied sufficient conditions for (\ref{nreg}), found rate of convergence in (\ref{*})  and provided an overview on the topic. 

Hwang and Janson (2008) have proved local limit theorems for a finite and infinite number of cells.

Barbour and Gnedin  (2009) have proved asymptotic normality of random vector 
$ (R_{n, 1}, \ldots, R_{n, k}) $ for $ k \ge1 $ 
under the condition $B_{n,i}\to\infty$ as $n\to\infty$, for any $i=1,\dots,k$.
Note that it is sufficient to have $B_{n,k}\to\infty$ or ${\mathbb E}R_{P(n),k}\to\infty$ (see Lemma 5).  
They have obtained (in their Lemma 2.1) an upper bound for the total variation distance between vectors
 $ (R_{n, 1}, \ldots, R_{n, k})$ and $ (R_{P(n), 1}, \ldots, R_{P(n), k})$, and also showed that 
the covariance matrices converge if and only if condition (\ref{reg}) holds.  

Barbour (2009) has proved theorems on approximation of the number of cells with $k$ balls by translated Poisson distribution in the total variation distance.

Chebunin and Kovalevskii (2016) have proved the Functional Central Limit Theorem for random vector $(R^*_{n,1}, ..., R^*_{n,k})$,  
$\theta\in(0,1), \ k\ge1$. Their proof is based on the convergence 
\[
\sup_{0\le t\le 1}\frac{|R_{P(nt)}-R_{[nt]}|}{\sqrt{ B_{n,1}^*}}\xrightarrow[]{\textrm{p}} 0  \ \textrm{   as  $n\to\infty$.}
\]

Zakrevskaya and Kovalevskii (2001) have proposed an implicit estimator of parameter $ \theta $ based on $ R_n $ for one-parametric family 
and proved its consistency.

Chebunin (2014) has proposed explicit estimators of the parameter based on $R_n$ for a broader class of distributions 
and proved their consistency.

In this paper, we analyse accuracy of a.s. approximation of $R^*_{n,k}$ by $R^*_{P(n),k}$
when $n$ grows, for any fixed $k\ge 1$.

\begin{Theorem}
 Under condition (\ref{reg}), for any $k\ge1$ and $\theta\in[0,1]$,
\[
b_n (R^*_{n,k}-R^*_{P(n),k})\xrightarrow[n\to\infty]{\textrm{a.s.}} 0, \ \   \textrm{and} \ \
 \ b_n (R_{n,k}-R_{P(n),k})\xrightarrow[n\to\infty]{\textrm{a.s.}} 0.
\]
Here
\begin{equation}\label{b_n}
b_n=\left\{
\begin{array}{ll}
(n L^{*}(n)\ln\ln n)^{-\frac12}, & \theta=1, \ k=1; \\
(n L(n)\ln\ln n)^{-\frac12}, & \theta=1, \ k\ge 2; \\
o(\min\{n^{\frac12-\theta}(\ln\ln n L(n))^{-1},(\ln n)^{-1}\}), & \theta<1, \ k\ge 1.
\end{array}
\right.
\end{equation}
\end{Theorem}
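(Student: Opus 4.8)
The plan is to exploit the monotonicity of $t\mapsto R^*_{P(t),k}$ together with a strong-law control of the fluctuations of the Poisson process $P(n)$ around $n$. The key observation is that, since balls are only added as time increases, $R^*_{P(t),k}$ is nondecreasing in $t$; hence, writing $P^-(n)=\min_{m\le n}P(m)$ and $P^+(n)=\max_{m\le n}P(m)$ when comparing at integer times, or more simply using that $R^*_{n,k}=R^*_{P(\tau_n),k}$ where $\tau_n$ is the (random) time at which the $n$-th ball arrives, we can sandwich $R^*_{n,k}-R^*_{P(n),k}$ between $R^*_{P(n\vee\tau_n),k}-R^*_{P(n\wedge\tau_n),k}$ and its negative. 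By the law of the iterated logarithm for the Poisson process (equivalently for the renewal counting process $P$, whose gaps are unit-mean exponentials), $|P(n)-n|=O(\sqrt{n\ln\ln n})$ a.s., so it suffices to bound the increment of $R^*_{P(\cdot),k}$ over a time interval of length $O(\sqrt{n\ln\ln n})$ around $n$.

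The first step is therefore to reduce the a.s. statement to an estimate of the form
\[
R^*_{P(n+h_n),k}-R^*_{P(n-h_n),k}=o(b_n^{-1})\quad\text{a.s.},\qquad h_n:=C\sqrt{n\ln\ln n},
\]
and similarly for $R_{n,k}$, using $R_{n,k}=R^*_{n,k}-R^*_{n,k+1}$. Because the increment over $[n-h_n,n+h_n]$ is itself a nondecreasing functional evaluated at the endpoints, its expectation is ${\mathbb E}R^*_{P(n+h_n),k}-{\mathbb E}R^*_{P(n-h_n),k}$, which Karlin's formulas express as an integral of $\alpha$-type quantities; under \eqref{reg} this is of order $h_n\cdot\frac{d}{dn}{\mathbb E}R^*_{P(n),k}\asymp h_n\,n^{-1}{\mathbb E}R^*_{P(n),k}$, and a direct computation with regular variation shows this matches the normalising constants $b_n$ in \eqref{b_n} (this is exactly why $L^*$ appears for $\theta=1,k=1$ and $L$ for $\theta=1,k\ge2$, and why the polynomial factor $n^{1/2-\theta}$ appears for $\theta<1$). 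The second step is to upgrade the expectation bound to an almost-sure bound: split the increment into its mean plus a centered part, control the centered part by a variance estimate (the variance of the increment is at most the variance of $R^*_{P(n+h_n),k}$, i.e.\ $O(B^*_{n,k})$, which is $O({\mathbb E}R^*_{P(n),k})$ up to constants), and apply a Borel–Cantelli argument along a geometric subsequence $n_j=\lfloor\rho^j\rfloor$, using monotonicity to fill the gaps between consecutive $n_j$.

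I expect the main obstacle to be the almost-sure passage via Borel–Cantelli: the fluctuations of $R^*_{P(\cdot),k}$ are only of order $\sqrt{{\mathbb E}R^*_{P(n),k}}$, which for $\theta=1$ is genuinely comparable to the target scale $b_n^{-1}$ up to the $\sqrt{\ln\ln n}$ factor, so the geometric-subsequence trick must be executed carefully — one needs a maximal inequality (e.g.\ via Doob or via a Poissonised martingale in the variable $t$) to control $\sup_{n_j\le n\le n_{j+1}}$ of the deviation, and the slowly varying functions $L,L^*$ must be handled with uniform (Potter-type) bounds so that the sum of tail probabilities over $j$ converges. A secondary technical point is verifying that ${\mathbb E}R^*_{P(n),k}$ and $B^*_{n,k}$ have the claimed regularly varying asymptotics for every $\theta\in[0,1]$ and $k\ge1$, including the boundary case $\theta=0$ where one only gets the $o(\cdot)$ statement; here I would invoke Karlin's Lemma 4 (the slow variation of $L^*$) and the elementary Tauberian estimates for sums $\sum_i p_i^k$ and $\sum_i p_i^k e^{-np_i}$, differentiating where needed to get increment bounds rather than just the leading-order asymptotics of the quantities themselves.
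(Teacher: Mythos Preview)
Your architecture matches the paper's: use the LIL for $P(\cdot)$ to trap $n$ in a window $[n-t'_n,n+t'_n]$ around the Poisson time, invoke monotonicity of $t\mapsto R^*_{P(t),k}$ to sandwich $R^*_{n,k}-R^*_{P(n),k}$ between Poissonised increments, and then control those increments by splitting into mean plus centered part, with the mean handled by regular-variation estimates of the type ${\mathbb E}R^*_{P(n+t'_n),k}-{\mathbb E}R^*_{P(n),k}\lesssim (t'_n/n){\mathbb E}R^*_{P(n),k}$. The paper does exactly this (its Lemma~2 is your mean-increment estimate, and it chooses $t'_n$ slightly larger than $\sqrt{n\ln\ln n}$, e.g.\ $\sqrt{n\ln\ln n}\,(L^*(n))^{-1/4}$, to leave room in the subsequent concentration step).

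The genuine gap is in your treatment of the centered increment. A Chebyshev bound based on the variance is not summable over all $n$ (for $\theta=1,k=1$ one gets tail probabilities of order $n^{-1/2}$ at best), so you fall back on a geometric subsequence $n_j=\lfloor\rho^j\rfloor$ plus ``monotonicity to fill the gaps''. But the quantity you need to control, $R^*_{P(n+h_n),k}-R^*_{P(n-h_n),k}$, is not monotone in $n$; sandwiching it for $n\in[n_j,n_{j+1}]$ forces you to bound an increment over an interval of length $\asymp(\rho-1)n_j$, whose \emph{expectation} alone is $\asymp(\rho-1){\mathbb E}R^*_{P(n_j),k}$, already much larger than $b_{n_j}^{-1}$ when $\theta=1$. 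Your suggested fix via a Doob/martingale maximal inequality does not go through either: $Z^*_{t,k}=R^*_{P(t),k}-{\mathbb E}R^*_{P(t),k}$ is \emph{not} a martingale in $t$ (the conditional drift at time $s$ depends on which cells currently hold $0,\dots,k-1$ balls), so there is no off-the-shelf maximal inequality to invoke.

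The paper resolves this with an exponential-moment (Chernoff-type) bound rather than a second-moment bound: writing the increment as a sum of independent indicators ${\bf I}_i$, it shows ${\mathbb E}\exp\{\gamma a_n\sum_i({\bf I}_i-{\bf P}_i)\}\le\exp\{\tfrac12(\gamma a_n)^2 e^{\gamma a_n}\sum_i{\bf P}_i\}$ with $\sum_i{\bf P}_i={\mathbb E}R^*_{P(n+t'_n),k}-{\mathbb E}R^*_{P(n),k}$, and then chooses $\gamma\asymp\ln n$ so that the Markov inequality yields tail probabilities $\le 2/n^2$, summable over \emph{all} integers $n$ without any subsequence or maximal-inequality machinery. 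Once you see that $\sum_i{\bf P}_i$ is exactly the mean increment you have already estimated, this is the natural upgrade from your variance bound, and it is the missing ingredient in your proposal.
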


Note that for $\theta=1$, sequence $b_n$ is what we could expect to appear
 in the Law of the Iterated Logarithm (LIL) for $R_{n,k}$.
For $\theta\in[1/2,1)$, sequence $b_n$ is better then the normalizing constant in CLT. For $\theta<1/2$,  
 coefficients $b_n=o((\ln n)^{-1})$ do not depend on $\theta$. 
As a corollary, we obtain asymptotic upper bounds for the absolute values of $Y^*_{n,k}$.
 
\begin{Corollary}
 Assume (\ref{reg}) to hold. 
If $\frac{{\mathbb E} R_{P(n),k_0}}{\ln n} \to \infty$ as $n\to\infty$, for some $k_0\ge1$,  then, for any  $k\le k_0$,
\[
{ \mathbb P} \left( \limsup\limits_{n\to\infty}\frac{|Y^*_{n,k}|}{ \sqrt{2 B^*_{n,k} \ln n}} \le 1\right)=1, \ \
{ \mathbb P} \left( \limsup\limits_{n\to\infty}\frac{|Y_{n,k}|}{ \sqrt{2 B_{n,k} \ln n}} \le 1\right)=1.
\] 
\end{Corollary}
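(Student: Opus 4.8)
\medskip
\noindent\emph{Sketch of proof.} The plan is to split each centred statistic into a de-Poissonisation gap, a centred Poissonised term and a Poissonisation bias, and to handle the Poissonised term by a one-sided law of the iterated logarithm obtained from Bernstein's inequality. For $k\le k_0$ write
\[
Y^*_{n,k}=A^*_{n,k}+Z^*_{n,k}-D^*_{n,k},\qquad
A^*_{n,k}=R^*_{n,k}-R^*_{P(n),k},\quad Z^*_{n,k}=R^*_{P(n),k}-{\mathbb E}R^*_{P(n),k},\quad D^*_{n,k}={\mathbb E}R^*_{n,k}-{\mathbb E}R^*_{P(n),k},
\]
and analogously $Y_{n,k}=A_{n,k}+Z_{n,k}-D_{n,k}$. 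The point that makes this work is that $R^*_{P(n),k}=\sum_i{\mathbb I}(P_i(n)\ge k)$ and $R_{P(n),k}=\sum_i{\mathbb I}(P_i(n)=k)$ are, after centring, sums of independent $[-1,1]$-valued summands, with variances $B^*_{n,k}$ and $B_{n,k}$. Under \eqref{reg} these variances are regularly varying, of the same index as ${\mathbb E}R^*_{P(n),k}$ and ${\mathbb E}R_{P(n),k}$ respectively and comparable to them (Karlin 1967; Barbour and Gnedin 2009), and comparable over the level index; combined with ${\mathbb E}R^*_{P(n),k}\ge{\mathbb E}R^*_{P(n),k_0}\ge{\mathbb E}R_{P(n),k_0}$ and the hypothesis, this yields $B^*_{n,k}/\ln n\to\infty$ and $B_{n,k}/\ln n\to\infty$ for all $k\le k_0$. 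The bias terms $D^*_{n,k},D_{n,k}$ are sums over $i$ of differences of binomial and Poisson cell-occupancy probabilities and, under \eqref{reg}, are bounded — in fact $o(1)$ — this being the Poissonisation-bias estimate behind \eqref{*}; divided by $\sqrt{2B^*_{n,k}\ln n}\to\infty$, resp.\ $\sqrt{2B_{n,k}\ln n}\to\infty$, they are negligible.

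For the centred Poissonised terms I would apply Bernstein's inequality. For $\lambda>1$ and $t=\lambda\sqrt{2B^*_{n,k}\ln n}$,
\[
{\mathbb P}\bigl(|Z^*_{n,k}|\ge t\bigr)\le 2\exp\!\Bigl(-\frac{t^2}{2B^*_{n,k}+2t/3}\Bigr)\le 2\exp\!\bigl(-(1-o(1))\lambda^2\ln n\bigr),
\]
the last step using $B^*_{n,k}/\ln n\to\infty$ (so that the denominator is $2B^*_{n,k}(1+o(1))$); this is summable in $n$ for each fixed $\lambda>1$. By the Borel--Cantelli lemma, $\limsup_n|Z^*_{n,k}|/\sqrt{2B^*_{n,k}\ln n}\le\lambda$ almost surely for every $\lambda>1$, hence $\le 1$ almost surely; the factor $2$ under the root is precisely what puts the summability threshold at $\lambda=1$. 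The same computation with $B_{n,k}$ in place of $B^*_{n,k}$ gives $\limsup_n|Z_{n,k}|/\sqrt{2B_{n,k}\ln n}\le 1$ almost surely.

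For the de-Poissonisation gaps the Theorem gives $b_nA^*_{n,k}\to0$ and $b_nA_{n,k}\to0$ almost surely for suitable $b_n$, and it remains to choose $b_n$ so that $b_n^{-1}=O(\sqrt{B^*_{n,k}\ln n})$, resp.\ $O(\sqrt{B_{n,k}\ln n})$. When $\theta<1$ the Theorem allows any $b_n=o(\min\{n^{1/2-\theta}(\ln\ln n\,L(n))^{-1},(\ln n)^{-1}\})$, and under \eqref{reg} and the hypothesis both $(\sqrt{B^*_{n,k}\ln n}\,)^{-1}$ and $(\sqrt{B_{n,k}\ln n}\,)^{-1}$ are of this form (they are $o((\ln n)^{-1})$ because the variances exceed $\ln n$, and $o(n^{1/2-\theta}(\ln\ln n\,L(n))^{-1})$ by a routine estimate using that $B^*_{n,k},B_{n,k}\asymp n^\theta L(n)$ for $\theta\in(0,1)$, the case $\theta=0$ being immediate since the latter tends to infinity); taking $b_n$ equal to each in turn gives $A^*_{n,k}/\sqrt{B^*_{n,k}\ln n}\to0$ and $A_{n,k}/\sqrt{B_{n,k}\ln n}\to0$ almost surely. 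When $\theta=1$ the prescribed $b_n$ already satisfies $b_n^{-2}\asymp V_{n,k}\ln\ln n=o(V_{n,k}\ln n)$, where $V_{n,k}$ is the common order of $B^*_{n,k}$ and $B_{n,k}$ — namely $nL^*(n)$ if $k=1$ and $nL(n)$ if $k\ge2$ — so again $A^*_{n,k}/\sqrt{B^*_{n,k}\ln n}\to0$ and $A_{n,k}/\sqrt{B_{n,k}\ln n}\to0$ almost surely. Putting the three contributions together by the triangle inequality gives $\limsup_n|Y^*_{n,k}|/\sqrt{2B^*_{n,k}\ln n}\le1$ and $\limsup_n|Y_{n,k}|/\sqrt{2B_{n,k}\ln n}\le1$ almost surely, which is the Corollary. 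The only genuinely nonroutine ingredient is the Bernstein estimate with the sharp exponent $\lambda^2$; the rest is bookkeeping on the regular-variation asymptotics of the variances and on the admissible range of $b_n$ in the Theorem, the mildly delicate case being $\theta=1$, $k=1$, where the normalisation is governed by $L^*$ rather than by $L$.
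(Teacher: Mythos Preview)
Your proof is correct and follows essentially the same route as the paper: the same three-term decomposition (de-Poissonisation gap handled by Theorem~1, centred Poissonised term handled by an exponential tail bound plus Borel--Cantelli, bias handled by the Gnedin--Hansen--Pitman estimate), and the same reduction to showing $B^*_{n,k}/\ln n\to\infty$ and $B_{n,k}/\ln n\to\infty$ from the hypothesis. The only cosmetic difference is that the paper packages the exponential tail bound as a separate array law-of-the-logarithm (Lemma~4, proved from the inequality $e^x\le 1+x+\tfrac{x^2}{2}e^{|x|}$ together with a uniform-integrability argument to pass to the infinite sum) and the variance comparability as Lemma~5, whereas you invoke Bernstein's inequality by name and cite the Barbour--Gnedin estimates for the comparability; these are the same arguments in different packaging.
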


Note that for $\theta\in(0,1]$ the assumptions of the corollary 1 are held for all $k_0 \ge 1$ (this follows from the 
asymptotics of $B^*_{n,k}$, see Lemma 1).

\begin{Remark}
As it follows from Lemma 1 in Gnedin, Hansen, Pitman (2007), ${\mathbb E } R_n-{\mathbb E } R_{P(n)}\to 0$, ${\mathbb E } R_{n,k}- {\mathbb E } R_{P(n),k}\to 0$ as $n\to\infty$. Then ${\mathbb E } R^*_{n,k}- {\mathbb E } R^*_{P(n),k}\to 0$ too, 
since $R^*_{n,k}=R_n - R_{n,1} - ... -R_{n,k-1}$.
To establish the LIL for non-random scheme of size $n=1,2,\dots$, it suffices to prove the LIL for the poissonized scheme, with normalising of order $o((\ln n)^{-1})$. We could not manage to prove the latter. 
However, we obtain a weaker result (corollary 1) which may be viewed as an analogue of the LIL for arrays of random variables 
(see Sung (1996) and Hoffmann, Miao, Li, Xu (2016) for further comments and background).
 \end{Remark}

The rest of the paper is organized as follows.
In Section 2-3 we formulate all the auxiliary results and prove Theorem 1 and Corollary 1. Appendix contains proofs of auxiliary results.

\section{Proof of Theorem 1}

Recall basic properties of Poisson process.
\begin{Proposition} 
Let $v_t$ be a positive function such that $v_t/\sqrt{t\ln\ln t}\to\infty$ as $t\to\infty$.
Then as $t\to\infty$
\[
\sup\limits_{ w_t\ge v_t}\frac{P(t+ w_t)-P(t)}{w_t}\to 1 \ \textrm{ a.s. }
\] 
\end{Proposition}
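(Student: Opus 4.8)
The plan is to center the Poisson process and reduce the assertion to a statement about the size of its fluctuations. Write $\widetilde P(s)=P(s)-s$, so that for every $w>0$
\[
\frac{P(t+w)-P(t)}{w}-1=\frac{\widetilde P(t+w)-\widetilde P(t)}{w}.
\]
Hence it suffices to prove that $\sup_{w\ge v_t}|\widetilde P(t+w)-\widetilde P(t)|/w\to 0$ almost surely as $t\to\infty$. This gives at once the upper bound $\sup_{w\ge v_t}(P(t+w)-P(t))/w\le 1+o(1)$; the matching lower bound follows by taking the single value $w=v_t$, since then $(P(t+v_t)-P(t))/v_t=1+(\widetilde P(t+v_t)-\widetilde P(t))/v_t=1+o(1)$ and the supremum dominates this particular ratio.

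The main tool is the classical law of the iterated logarithm for the Poisson process (a Lévy process with unit mean and unit variance at time $1$): $\limsup_{s\to\infty}|\widetilde P(s)|/\sqrt{2s\ln\ln s}=1$ almost surely. Fix $\varepsilon>0$. Then almost surely there is a random $s_0$ with $|\widetilde P(s)|\le(1+\varepsilon)\sqrt{2s\ln\ln s}$ for all $s\ge s_0$. Since $v_t\to\infty$, for all $t$ large enough we have $t\ge s_0$ and $t+w\ge t+v_t\ge s_0$ for every $w\ge v_t$, so this bound applies to $\widetilde P(t)$ and to $\widetilde P(t+w)$ uniformly in $w\ge v_t$. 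Using also that $\sup_{w\ge v_t}|\widetilde P(t)|/w=|\widetilde P(t)|/v_t$, we obtain
\[
\sup_{w\ge v_t}\frac{|\widetilde P(t+w)-\widetilde P(t)|}{w}\le(1+\varepsilon)\frac{\sqrt{2t\ln\ln t}}{v_t}+(1+\varepsilon)\sup_{w\ge v_t}\frac{\sqrt{2(t+w)\ln\ln(t+w)}}{w}.
\]

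The first term tends to $0$ by the hypothesis $v_t/\sqrt{t\ln\ln t}\to\infty$. For the second term I would split the range of $w$ at $w=t$: for $v_t\le w\le t$ one has $t+w\le 2t$, so the summand is at most $\sqrt{4t\ln\ln(2t)}/v_t$, which vanishes by the hypothesis (note $\ln\ln(2t)\sim\ln\ln t$); for $w>t$ one has $t+w\le 2w$, so the summand is at most $\sqrt{4\ln\ln(2w)/w}$, a function eventually decreasing in $w$, hence bounded on $\{w>t\}$ by its value $\sqrt{4\ln\ln(2t)/t}\to 0$ at the endpoint. Letting $\varepsilon\downarrow 0$ along a countable sequence finishes the argument. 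I do not anticipate a serious obstacle; the only delicate points are the uniformity of the LIL bound over the unbounded index set $\{w\ge v_t\}$ — which works because $t+v_t\to\infty$ pushes the whole range past the random threshold $s_0$ — and the regime $w\asymp t$, which is exactly where the scaling $v_t\gg\sqrt{t\ln\ln t}$ in the hypothesis is used. Should one prefer not to invoke the LIL as a black box, the required one-sided estimate $|\widetilde P(s)|=O(\sqrt{s\ln\ln s})$ a.s. can be reproved directly from Doob's maximal inequality for the martingale $\widetilde P$ combined with the Borel--Cantelli lemma along a geometric grid $s\in[2^m,2^{m+1})$.
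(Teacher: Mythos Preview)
Your proof is correct and follows essentially the same route as the paper: center the Poisson process, decompose $(P(t+w)-P(t))/w-1$ into the two fluctuation terms $\widetilde P(t+w)/w$ and $\widetilde P(t)/w$, invoke the LIL for $P$, and control $\sqrt{(t+w)\ln\ln(t+w)}/w$ by splitting at $w=t$ (the paper phrases the same split as the inequality $w/\sqrt{(t+w)\ln\ln(t+w)}\ge\min\{w/\sqrt{2t\ln\ln(2t)},\,w/\sqrt{2w\ln\ln(2w)}\}$). Your write-up is simply more explicit about the random threshold $s_0$ and the $\varepsilon$-argument, but the underlying argument is the same.
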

Clearly, 
\[
\frac{P(t+ w_t)-P(t)}{w_t}-1=\frac{P(t+ w_t)-t- w_t}{w_t}-\frac{P(t)-t}{w_t},
\]
\[
\frac{w_t}{\sqrt{(t+w_t)\ln\ln(t+w_t)}}\ge\min\left( \frac{w_t}{\sqrt{2t\ln\ln(2t)}},\frac{w_t}{\sqrt{2w_t\ln\ln(2w_t)}}\right)\xrightarrow[t\to\infty]{}\infty.
\]
 So by the LIL for  $P(t)$ we have 
\[
\sup\limits_{ w_t\ge v_t} \frac{P(t)-t}{w_t}\xrightarrow[t\to\infty]{\textrm{a.s.}}  0, \ \ \ \textrm{and} \ \   \ 
\sup\limits_{ w_t\ge v_t} \frac{P(t+w_t)-t-w_t}{w_t}\xrightarrow[t\to\infty]{\textrm{a.s.}} 0.
\]

We also need the following auxiliary results from
Karlin (1967, Theorem 1, formulas (23), (26) and (37)). As $t\to\infty$,
\[
{\mathbb E} R_{P (t)}\sim {\mathbb Var} R_{P (t)}\sim {\mathbb E} R_{P (t),1}\sim  {\mathbb Var} R_{P (t),1}\sim  t L^{*}(t) \ \ \textrm{if} \ \ \theta=1,
\]
 \[
{\mathbb E} R_{P (t)}\sim \Gamma(1-\theta) \alpha(t), \ {\mathbb Var} R_{P (t)}\sim \Gamma(1-\theta)(2^\theta -1) \alpha(t)\ \ \textrm{if} \ \ \theta\in(0,1),
\]
\[
{\mathbb E} R_{P (t)}\sim \alpha(t), \ \ {\mathbb Var} R_{P (t)}\sim \alpha(2t)-\alpha(t)\ \ \textrm{if} \ \ \theta=0,
\]
 \[
{\mathbb E} R_{P (t),k}\sim \theta\frac{\Gamma(k-\theta)}{k!} \alpha(t), \ \ \
{\mathbb Var} R_{P (t),k}\sim \frac{\theta}{k!}\left[\Gamma(k-\theta)-\frac{\Gamma(2k-\theta)}{2^{2k-\theta}k!}\right] \alpha(t)
\]
if either $\theta\in(0,1), \ k\ge1$ or $\theta=1, \ k\ge2$.

The proofs of the following lemmas may be found in Appendix.

\begin{Lemma}
For $k\ge2$ and as $t\to\infty$ 
\[
{\mathbb E} (R^{*}_{P(t),k})\sim\left\{
\begin{array}{ll}
\theta \sum\limits_{i=k}^\infty \frac{\Gamma(i-\theta)}{i!} \alpha(t), & \textrm{if} \  \ \theta\in(0,1]; \\
\alpha(t), & \textrm{if} \  \  \theta=0,  
\end{array}
\right.
\]
If $\theta\in(0,1]$, then also
\[
{\mathbb Var} (R^{*}_{P(t),k})\sim \left(
2^{\theta} \Gamma(2-\theta)-\frac{ \Gamma(k-\theta)}{ (k-1)!} 
-\theta \sum\limits_{s=0}^{k-1}\sum\limits_{m=0}^{k-1}\frac{{\mathbb I}\{ s+m\ge2 \} \Gamma(s+m-\theta)}{2^{s+m-\theta} s! m!}\right) \alpha(t).
\]
\end{Lemma}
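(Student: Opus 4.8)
Throughout write $h_k(u):=\mathbb{P}(\mathrm{Poisson}(u)\ge k)=\sum_{j\ge k}\frac{u^j}{j!}e^{-u}$ and $g_k(u):=1-h_k(u)=\sum_{s=0}^{k-1}\frac{u^s}{s!}e^{-u}$. Because the processes $P_i(\cdot)$ are mutually independent,
\[
\mathbb{E} R^{*}_{P(t),k}=\sum_{i}h_k(p_it),\qquad
\mathbb{Var}\,R^{*}_{P(t),k}=\sum_i h_k(p_it)\bigl(1-h_k(p_it)\bigr)=\sum_i\bigl(g_k(p_it)-g_k(p_it)^2\bigr),
\]
the last equality using $1-h_k=g_k$. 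A short Taylor expansion gives $h_k(u)=u^k/k!+O(u^{k+1})$ and also $g_k(u)-g_k(u)^2=u^k/k!+O(u^{k+1})$ as $u\to0$; both functions are continuous, bounded by $1$, and $g_k-g_k^2\to0$ as $u\to\infty$. So everything reduces to the asymptotics of sums $\sum_i g(p_it)$ for suitable $g$, and the restriction $k\ge2$ will enter only through the fact that these $g$ vanish at least quadratically at the origin.

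The heart of the proof is the estimate that for every continuous bounded $g\ge0$ with $g(u)=O(u^2)$ as $u\to0$, under \eqref{reg} with $\theta\in(0,1]$,
\[
\sum_i g(p_it)\ \sim\ \alpha(t)\,\theta\int_0^\infty g(u)u^{-\theta-1}\,du\qquad(t\to\infty),
\]
and, for $\theta=0$ (with $g$ having a limit at $\infty$), $\sum_i g(p_it)\sim\alpha(t)\,g(\infty)$. This is essentially Karlin's Theorem~1; I would prove it by splitting the index set into $\{p_it<\varepsilon\}$, $\{\varepsilon\le p_it\le V\}$ and $\{p_it>V\}$. On the middle range $\alpha(t)^{-1}\sum_i\delta_{p_it}$ converges vaguely on $[\varepsilon,V]$ to $\theta u^{-\theta-1}\,du$, since $\#\{i:p_it\ge a\}=\alpha(t/a)\sim a^{-\theta}\alpha(t)$ by slow variation; the right tail is dominated by $\#\{i:p_it>V\}=\alpha(t/V)\sim V^{-\theta}\alpha(t)$, whose prefactor $V^{-\theta}\to0$; and the left tail is dominated, via $g(u)=O(u^2)$, by $t^2\sum_{i:p_i<\varepsilon/t}p_i^2$, which by Karamata's theorem at $0$ is $\sim\frac{\theta}{2-\theta}\varepsilon^{2-\theta}\alpha(t)$, with $\varepsilon^{2-\theta}\to0$. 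Letting first $t\to\infty$, then $V\to\infty$ and $\varepsilon\to0$, the claim follows. (Here $k\ge2$ is exactly what makes $g(u)=O(u^k)$ with $k>\theta$ for every $\theta\in[0,1]$, so the Karamata integral at $0$ converges; for $k=1,\theta=1$ it would not, consistent with that case being excluded.)

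Applying this with $g=h_k$ and interchanging sum and integral by Tonelli gives, for $\theta\in(0,1]$,
\[
\mathbb{E} R^{*}_{P(t),k}\sim\alpha(t)\,\theta\sum_{j\ge k}\frac1{j!}\int_0^\infty u^{j-\theta-1}e^{-u}\,du=\alpha(t)\,\theta\sum_{j\ge k}\frac{\Gamma(j-\theta)}{j!},
\]
the series converging because $\Gamma(j-\theta)/j!\sim j^{-1-\theta}$; for $\theta=0$ the estimate yields $\mathbb{E} R^{*}_{P(t),k}\sim\alpha(t)h_k(\infty)=\alpha(t)$. Applying it with $g=g_k-g_k^2$ and writing $g_k-g_k^2=\sum_{s=0}^{k-1}\frac{u^s}{s!}e^{-u}-\sum_{s,m=0}^{k-1}\frac{u^{s+m}}{s!m!}e^{-2u}$, every monomial of degree $\ge2$ may be integrated against $\theta u^{-\theta-1}\,du$ term by term, contributing $\theta\sum_{s=2}^{k-1}\frac{\Gamma(s-\theta)}{s!}$ minus $\theta\sum_{0\le s,m\le k-1,\,s+m\ge2}\frac{\Gamma(s+m-\theta)}{2^{s+m-\theta}s!m!}$; the leftover low-order part is $(1+u)e^{-u}-(1+2u)e^{-2u}=h_2(2u)-h_2(u)$, whose $\theta u^{-\theta-1}$-integral equals $(2^\theta-1)\,\theta\int_0^\infty h_2(u)u^{-\theta-1}\,du=(2^\theta-1)\Gamma(2-\theta)$ by the substitution $u\mapsto2u$ together with the mean computation at $k=2$. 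Finally the elementary identity $\theta\sum_{s=2}^{k-1}\frac{\Gamma(s-\theta)}{s!}=\Gamma(2-\theta)-\frac{\Gamma(k-\theta)}{(k-1)!}$ (induction on $k$, using $(k-\theta)\Gamma(k-\theta)=\Gamma(k+1-\theta)$) collapses the sum to $2^\theta\Gamma(2-\theta)-\frac{\Gamma(k-\theta)}{(k-1)!}-\theta\sum_{s,m=0}^{k-1}\frac{\mathbb{I}\{s+m\ge2\}\Gamma(s+m-\theta)}{2^{s+m-\theta}s!m!}$, which is the stated expression; all these Gamma values are finite for $k\ge2$, $\theta\in(0,1]$, so the computation is valid up to $\theta=1$.

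The conceptual obstacle is the key estimate, and within it the uniform control of the two tails: the right tail is harmless only because $g$ is bounded and the scale factor $V^{-\theta}\to0$ as $V\to\infty$, and the left tail is harmless only because $k\ge2$ forces quadratic vanishing at $0$ (Karamata with exponent $2>\theta$). Beyond that, the genuine labour is the bookkeeping in the variance computation: correctly isolating the finitely many monomials for which the $u^{-\theta-1}$-integral diverges at the origin, recognizing that their combination is precisely $h_2(2u)-h_2(u)$ (which is what manufactures the factor $2^{\theta}$ via scaling), and checking the Gamma recursion that reproduces exactly the indicator $\mathbb{I}\{s+m\ge2\}$ and the term $2^{\theta}\Gamma(2-\theta)$ in the final formula.
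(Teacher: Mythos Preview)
Your proof is correct and follows essentially the same route as the paper. Both arguments convert $\sum_i g(p_it)$ into an integral against the regularly varying counting function $\alpha$, pass to the limit using slow variation, and then evaluate the resulting Gamma integrals; the paper does this inline via integration by parts on the Stieltjes integral $\int g(t/x)\,d\alpha(x)$ with the substitutions $y=x/t$ and $y=x/(2t)$, while you package the same step as a standalone Abelian ``key estimate'' and apply it to $g=h_k$ and $g=h_k(1-h_k)$. Your telescoping identity $\theta\sum_{s=2}^{k-1}\Gamma(s-\theta)/s!=\Gamma(2-\theta)-\Gamma(k-\theta)/(k-1)!$ and your isolation of the low-order piece $(1+u)e^{-u}-(1+2u)e^{-2u}=h_2(2u)-h_2(u)$ correspond exactly to the paper's telescoping of $\sum_{s=0}^{k-1}\frac{1}{s!}(sy^{-s-1}-y^{-s-2})$ and its separation of the $s+m\le1$ terms into the single $y^{-3}$ integral. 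The only presentational differences are that the paper quotes Karlin's formula (23) directly for the mean rather than recomputing it, and that your abstraction into a single key estimate makes the role of the hypothesis $k\ge2$ (via the $O(u^2)$ vanishing needed for the left-tail Karamata bound) more explicit.
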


\begin{Lemma}

Let $t_n=o(n)$ as $n\to\infty$. Then, for any $k\ge1$ and $\theta\in[0,1]$, there exists $n_0$ such that, for $n \ge n_0$,

\[ 
|{\mathbb E} R^*_{P (n+t_n),k}-{\mathbb E} R^*_{P (n),k}|\le \frac{2 |t_n|}{n} {\mathbb E} R^*_{P (n),k}.
\]
 \end{Lemma}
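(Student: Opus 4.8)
The plan is to express $g(t):=\mathbb{E}R^{*}_{P(t),k}$ as the integral of its derivative and to control the logarithmic derivative of $g$ uniformly on an $o(n)$-neighbourhood of $n$. Recall $g(t)=\sum_{i\ge1}\mathbb{P}(P_i(t)\ge k)$, which is finite (it is at most $\mathbb{E}R_{P(t)}<\infty$) and nondecreasing in $t$, since each $\mathbb{P}(P_i(t)\ge k)$ is. For the Poisson tail $f_k(x)=\mathbb{P}(\mathrm{Poi}(x)\ge k)=\sum_{j\ge k}e^{-x}x^j/j!$, termwise differentiation gives a telescoping sum, so $f_k'(x)=e^{-x}x^{k-1}/(k-1)!=\mathbb{P}(\mathrm{Poi}(x)=k-1)$ and hence $xf_k'(x)=k\,\mathbb{P}(\mathrm{Poi}(x)=k)$. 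Therefore $\frac{d}{dt}f_k(p_it)=p_i\,\mathbb{P}(P_i(t)=k-1)=\frac{k}{t}\,\mathbb{P}(P_i(t)=k)$, and integrating over $t$ and summing over $i$ (the integrands are nonnegative, so Tonelli applies) yields, for $t_n\ge0$,
\[
g(n+t_n)-g(n)=\int_n^{n+t_n}\frac{k}{u}\sum_{i\ge1}\mathbb{P}(P_i(u)=k)\,du=\int_n^{n+t_n}\frac{k\,\mathbb{E}R_{P(u),k}}{u}\,du,
\]
and symmetrically $g(n)-g(n+t_n)=\int_{n+t_n}^{n}\frac{k\,\mathbb{E}R_{P(u),k}}{u}\,du$ for $t_n<0$.

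The second step is to show $\limsup_{u\to\infty}k\,\mathbb{E}R_{P(u),k}/\mathbb{E}R^{*}_{P(u),k}\le1$. For $k=1$ this is immediate from $\mathbb{E}R_{P(u),1}\le\mathbb{E}R^{*}_{P(u),1}$. For $k\ge2$ write $\mathbb{E}R_{P(u),k}=\mathbb{E}R^{*}_{P(u),k}-\mathbb{E}R^{*}_{P(u),k+1}$ and use Lemma 1. If $\theta\in(0,1]$, Lemma 1 gives $\mathbb{E}R^{*}_{P(u),j}\sim\theta\,\alpha(u)\sum_{i\ge j}a_i$ with $a_i=\Gamma(i-\theta)/i!$, so the ratio above tends to $k\,a_k/\sum_{i\ge k}a_i$; since $a_{i+1}/a_i=(i-\theta)/(i+1)\ge(i-1)/(i+1)$, a telescoping product and then a telescoping sum give $\sum_{i\ge k}a_i/a_k\ge\sum_{j\ge0}\frac{k(k-1)}{(k+j)(k+j-1)}=k$, whence the ratio is at most $1$ (in fact it equals $\theta$). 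If $\theta=0$, Lemma 1 gives $\mathbb{E}R^{*}_{P(u),k}\sim\mathbb{E}R^{*}_{P(u),k+1}\sim\alpha(u)$, so $k\,\mathbb{E}R_{P(u),k}=o(\alpha(u))$ and the ratio tends to $0$. In all cases there is $u_0$ with $k\,\mathbb{E}R_{P(u),k}\le\frac32\,g(u)$ for $u\ge u_0$.

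Now combine the two, using $t_n=o(n)$: take $n$ so large that $|t_n|\le n/6$ and $n-|t_n|\ge u_0$, so that $u\ge u_0$ throughout the integration range. If $t_n\ge0$, then $u\ge n$ and $g(u)\le g(n+t_n)$ on $[n,n+t_n]$, hence
\[
0\le g(n+t_n)-g(n)\le\frac{3}{2n}\int_n^{n+t_n}g(u)\,du\le\frac{3t_n}{2n}\,g(n+t_n);
\]
since $3t_n/(2n)\le1/4$, this rearranges to $g(n+t_n)\le\frac43 g(n)$, and substituting back, $g(n+t_n)-g(n)\le\frac{3t_n}{2n}\cdot\frac43 g(n)=\frac{2t_n}{n}g(n)$. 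If $t_n<0$, then $u\ge n-|t_n|\ge\frac56 n$ and $g(u)\le g(n)$ on $[n+t_n,n]$, so $g(n)-g(n+t_n)\le\frac32\cdot\frac{6}{5n}\int_{n+t_n}^{n}g(u)\,du\le\frac{9|t_n|}{5n}g(n)\le\frac{2|t_n|}{n}g(n)$. In either case $|g(n+t_n)-g(n)|\le\frac{2|t_n|}{n}\,\mathbb{E}R^{*}_{P(n),k}$, which is the assertion with a suitable $n_0$.

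The computations in the first and third steps are routine (the Poisson identities, the FTC/Tonelli interchange, and the bookkeeping with $|t_n|\le n/6$). The one step carrying real content is the uniform bound $\limsup k\,\mathbb{E}R_{P(u),k}/\mathbb{E}R^{*}_{P(u),k}\le1$, which is where condition (\ref{reg}) enters through Lemma 1; the only mild subtlety there is that a crude per-cell comparison gives merely the bound $k$, so one really needs the telescoping of $\sum_{i\ge k}\Gamma(i-\theta)/i!$ to push the limiting constant below $2$ for all $k$.
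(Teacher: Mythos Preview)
Your proof is correct and takes a genuinely different route from the paper's.

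The paper argues at the level of the function $g(t)=\mathbb{E}R^{*}_{P(t),k}$ as a whole: by Lemma~1, $g$ is regularly varying of index $\theta$, and the Karamata representation (Bingham--Goldie--Teugels, Theorem~1.3.1) then gives
\[
\frac{g(n+t_n)}{g(n)}=\Bigl(1+\tfrac{t_n}{n}\Bigr)^{\theta+o(1)}=1+\tfrac{t_n}{n}(\theta+o(1)),
\]
from which the bound with constant $2$ follows for large $n$. This is short but imports a nontrivial structural theorem, and the passage from ``$g\sim c\,\alpha$'' to a uniform statement about $g(n+t_n)/g(n)$ is left implicit.

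You instead differentiate: the Poisson identity $xf_k'(x)=k\,\mathbb{P}(\mathrm{Poi}(x)=k)$ gives $g'(u)=\tfrac{k}{u}\,\mathbb{E}R_{P(u),k}$, so the logarithmic derivative $u\,g'(u)/g(u)$ equals $k\,\mathbb{E}R_{P(u),k}/\mathbb{E}R^{*}_{P(u),k}$. You then bound this ratio by $3/2$ for large $u$ via Lemma~1 (your telescoping argument $\sum_{i\ge k}a_i\ge k\,a_k$ is what forces the limit below $2$ for every $k$; in fact the limit is exactly $\theta$, matching the paper's $\theta+o(1)$), and a short Gr\"onwall-type bootstrap turns this into the stated inequality. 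Your approach is more elementary and fully self-contained: it bypasses the representation theorem and makes explicit the single quantitative fact driving both proofs, namely that $u\,g'(u)/g(u)\to\theta\le1$.
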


Let for $k\geq 1$ 
\[
Z_{n,k}^{*} =R^{*}_{P (n),k}-{\mathbb E} R^{*}_{P (n),k}, \ \ \ \ \ Z_{n,k} =R_{P (n),k}-{\mathbb E} R_{P (n),k}.
\]

For any sequence $t_n=o(n)$ as $n\to\infty$, 
we may introduce a positive sequence $a_n=a_n(k,\theta,t_n)$ satisfying the following constraints:
\[
a_n=\left\{
\begin{array}{ll}
o\left(\frac{\min \{(|t_n| L^*(n) )^{-\frac12} , 1 \}} {\ln n}\right), & \textrm{if} \ \ \theta=1, \ k=1; \\
o\left(\frac{\min \{(|t_n| n^{\theta-1} L(n) )^{-\frac12} , 1 \}}{\ln n}\right), & \ \textrm{if either $\theta\in[0,1)$ or $k\ge 2$}.  
\end{array}
\right.
\]

\begin{Lemma} For any $k\ge1$, and $\theta\in[0,1]$,
\[
a_n(Z^*_{n+t_n,k}-Z^*_{n,k})\xrightarrow[n\to\infty]{\textrm{a.s.}} 0, \ \   \textrm{and} \ \
  a_n (Z_{n+t_n,k}-Z_{n,k})\xrightarrow[n\to\infty]{\textrm{a.s.}} 0.
\] 
\end{Lemma}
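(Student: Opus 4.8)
The plan is to reduce the statement to a single concentration inequality for a sum of independent indicators. Put
\[
D_n:=R^{*}_{P(n+t_n),k}-R^{*}_{P(n),k},
\]
so that, directly from the definitions, $Z^{*}_{n+t_n,k}-Z^{*}_{n,k}=D_n-{\mathbb E} D_n$. Since every sample path $t\mapsto P_i(t)$ is non-decreasing, so is $t\mapsto R^{*}_{P(t),k}$; hence for each fixed $n$ the variable $|D_n|$ equals $\sum_{i\ge1}\bigl|{\mathbb I}(P_i(n+t_n)\ge k)-{\mathbb I}(P_i(n)\ge k)\bigr|$, a sum of $\{0,1\}$-valued random variables that are independent by the splitting property. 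Consequently
\[
\mathrm{Var}(D_n)=\mathrm{Var}(|D_n|)\le{\mathbb E}|D_n|=|{\mathbb E} D_n|=\bigl|{\mathbb E} R^{*}_{P(n+t_n),k}-{\mathbb E} R^{*}_{P(n),k}\bigr|,
\]
which by Lemma~2 is at most $\tfrac{2|t_n|}{n}\,{\mathbb E} R^{*}_{P(n),k}$ for all large $n$.

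Next I would insert the known asymptotics of ${\mathbb E} R^{*}_{P(n),k}$: it is of order $nL^{*}(n)$ when $\theta=1,k=1$ (Karlin's formula for ${\mathbb E} R_{P(n)}$) and of order $\alpha(n)=n^{\theta}L(n)$ in every other case (Karlin's formulas for ${\mathbb E} R_{P(n)}$ and ${\mathbb E} R_{P(n),k}$, together with Lemma~1 for $R^{*}_{P(n),k}$, $k\ge2$). Writing $V_n:=|t_n|L^{*}(n)$ in the first case and $V_n:=|t_n|n^{\theta-1}L(n)$ in the remaining cases, we obtain $\mathrm{Var}(D_n)\le C V_n$ for all large $n$. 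The constraints defining $a_n$ say exactly that $a_n\ln n=o(\min\{V_n^{-1/2},1\})$, which is equivalent to the conjunction of $a_n\ln n\to0$ and $a_n^{2}(\ln n)^{2}V_n\to0$; hence $1/(a_n\ln n)\to\infty$ and $a_n^{2}\,\mathrm{Var}(D_n)=o\bigl((\ln n)^{-2}\bigr)$.

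With these two facts the conclusion is a routine Bernstein-plus-Borel--Cantelli argument. Fix $\varepsilon>0$ and apply Bernstein's inequality for sums of independent $[0,1]$-valued variables to $|D_n|$, noting $|D_n|-{\mathbb E}|D_n|=\pm(D_n-{\mathbb E} D_n)$:
\[
{\mathbb P}\bigl(|D_n-{\mathbb E} D_n|\ge \varepsilon/a_n\bigr)\ \le\ 2\exp\!\left(-\frac{\varepsilon^{2}/(2a_n^{2})}{\mathrm{Var}(D_n)+\varepsilon/(3a_n)}\right).
\]
If $\mathrm{Var}(D_n)\le\varepsilon/(3a_n)$, the exponent is at most $-\tfrac{3\varepsilon}{4a_n}$, which is $\le -2\ln n$ for $n$ large because $1/(a_n\ln n)\to\infty$; thus the probability is $\le2n^{-2}$. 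If $\mathrm{Var}(D_n)>\varepsilon/(3a_n)$, the exponent is at most $-\tfrac{\varepsilon^{2}}{4a_n^{2}\mathrm{Var}(D_n)}$, which is $\le-c(\ln n)^{2}$ for any prescribed $c$ once $n$ is large because $a_n^{2}\mathrm{Var}(D_n)=o((\ln n)^{-2})$; thus the probability is $\le2e^{-c(\ln n)^{2}}$. In both regimes the bound is summable in $n$, so Borel--Cantelli gives $a_n(Z^{*}_{n+t_n,k}-Z^{*}_{n,k})=a_n(D_n-{\mathbb E} D_n)\to0$ a.s.; as this holds for every $\varepsilon=1/m$, $m\ge1$, the first assertion follows. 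For the second, write $R_{P(m),k}=R^{*}_{P(m),k}-R^{*}_{P(m),k+1}$, so $Z_{m,k}=Z^{*}_{m,k}-Z^{*}_{m,k+1}$, and apply the first assertion at levels $k$ and $k+1$; one only has to check that the constraint on $a_n=a_n(k,\theta,t_n)$ also implies the level-$(k+1)$ constraint, which is trivial for $k\ge2$ and, for $k=1$, $\theta=1$, uses $L(n)=o(L^{*}(n))$ (read off from ${\mathbb E} R_{P(n)}\sim{\mathbb E} R_{P(n),1}\sim nL^{*}(n)$ and $\sum_{i\ge2}{\mathbb E} R_{P(n),i}\asymp\alpha(n)=nL(n)$).

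There is no serious obstacle here; the two points to watch are (i) that path-monotonicity of $R^{*}_{P(t),k}$ together with the Poisson splitting property really makes $|D_n|$ a sum of independent indicators, which is what legitimises the variance bound $\mathrm{Var}(D_n)\le{\mathbb E}|D_n|$ and the use of Bernstein's inequality, and (ii) that the stated decay of $a_n$ is precisely what is needed to beat both terms of the Bernstein exponent, i.e., to make $a_n^{2}\mathrm{Var}(D_n)$ and $1/a_n$ small against the appropriate powers of $\ln n$. The sign of $t_n$, including the degenerate case $t_n=0$, is irrelevant, since Lemma~2 and the whole argument involve only $|t_n|$ and $|D_n|$.
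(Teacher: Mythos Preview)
Your argument is correct and follows essentially the same route as the paper: write the difference as a sum of independent indicators via Poisson splitting and the monotonicity of $R^{*}_{P(t),k}$, bound its variance by $|{\mathbb E}D_n|$, control the latter through Lemma~2 and the asymptotics of ${\mathbb E}R^{*}_{P(n),k}$ to obtain $a_n^2\,|{\mathbb E}D_n|=o((\ln n)^{-2})$, apply an exponential tail bound, and sum via Borel--Cantelli. The only packaging difference is that you invoke Bernstein's inequality directly, whereas the paper derives the Chernoff bound by hand from $e^{x}\le 1+x+\tfrac{x^{2}}{2}e^{|x|}$ and then optimises $\gamma_n=3\ln n/\eta$; both use precisely the two consequences of the $a_n$-constraint, namely $a_n\ln n\to 0$ and $a_n^{2}(\ln n)^{2}V_n\to 0$. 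One small loose end in your write-up: Bernstein's inequality is stated for finite sums, so a sentence is needed to pass to the infinite sum (apply it to $\sum_{i\le N}$, note the bound is uniform in $N$ since $\sigma_N^{2}\le\sigma^{2}$, and let $N\to\infty$ using a.s.\ convergence of the partial sums); the paper handles the analogous step explicitly through uniform integrability of $\{e^{CS_{n,N}}\}_N$. Your reduction of the second assertion via $Z_{m,k}=Z^{*}_{m,k}-Z^{*}_{m,k+1}$, together with the check that $L(n)=o(L^{*}(n))$ when $\theta=1$, is a clean way to finish and slightly more explicit than the paper's ``follows directly''.
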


\begin{proof}
Let for $n_2 \geq n_1 \geq 0$:
\[
R^{*}_{P(n_2),k}-R^{*}_{P(n_1),k}=\sum_{i=1}^{\infty} {\mathbb I}(P_i(n_2))\ge k, \ P_i(n_1)< k)
\stackrel{def}{=}\sum_{i=1}^{\infty} {\bf I}_{i}, \ \ \textrm{and} \ \ {\mathbb P}({\bf I}_{i})= {\bf P}_i.
\]

Since 
\begin{equation}\label{Bound}
\sum_{i=1}^{\infty} {\bf I}_{i}\le P(n_2)-P(n_1) \ \ \textrm{a.s.,}
\end{equation}
 and the variance of an indicator random variable is not bigger than its expectation,
we have 
\begin{equation}\label{V}
{\mathbb Var} \left(\sum_{i=1}^{\infty} {\bf I}_{i}\right)\le \sum_{i=1}^{\infty} {\bf P}_{i}\le 
{\mathbb E} (P(n_2)-P(n_1)) = n_2-n_1.
\end{equation}
  For any fixed $n_1, \ n_2$ and $C>0$, we have
\[
{\mathbb E} \exp\left\{ C \sum_{i=1}^{\infty} ({\bf I}_{i}- {\bf P}_{i})  \right\}
\]
\begin{equation}\label{E}
\le 
{\mathbb E} \exp\left\{ C (P(n_2)-P(n_1))  \right\}=\exp\{ (n_2-n_1)(e^C-1)\}<\infty.
\end{equation}
Since $C>0$, (\ref{Bound}) and (\ref{E}) imply uniform integrability of sequence
 \[
\left\{\exp\left( C \sum_{i=1}^{N} ({\bf I}_{i}- {\bf P}_{i})\right)\right\}_{N=1}^\infty. 
\]

By inequalities $e^x\le 1+x+\frac{x^2}2 e^{|x|}$ for all $x\in {\mathbb R}$ and $|{\bf I}_{i}-{\bf P}_i|\le 1$ a.s., we have,
 for any $\gamma_n>0$,
\[
{\mathbb E} e^{\gamma_n a_n ({\bf I}_{i}-{\bf P}_i)}\le 1+ \frac{(\gamma_n a_n)^2}{2 } e^{\gamma_n a_n} {\mathbb E} ({\bf I}_{i}-{\bf P}_i)^2\le \exp\left\{\frac{(\gamma_n a_n)^2}{2} e^{\gamma_n a_n} {\bf P}_i\right\}. 
\]

Since $\{ {\bf I}_i\}_{i=1}^\infty$ are mutually independent, we have, for any  $N\ge 1$, 
\[
{\mathbb E} \exp\left\{\gamma_n a_n \sum_{i=1}^N ({\bf I}_{i}-{\bf P}_i)\right\}=
\prod_{i=1}^N {\mathbb E} \exp\left\{\gamma_n a_n ({\bf I}_{i}-{\bf P}_i)\right\} 
\]
\begin{equation}\label{ri}
\le\exp\left\{\frac{(\gamma_n a_n)^2}{2} e^{\gamma_n a_n} \sum_{i=1}^N{\bf P}_i\right\}.
\end{equation}

  Since (\ref{V}), by the Chebyshev inequality, we have $\sum_{i=N}^{\infty} ({\bf I}_{i}- {\bf P}_{i})\to 0$ in probability as  $N\to\infty$, for any fixed $n_1, \ n_2$.
Since $e^{Cx}$ is a continuous function,  
$\exp\left\{C \sum_{i=1}^{N} ({\bf I}_{i}- {\bf P}_{i})\right\}-\exp\left\{C \sum_{i=1}^{\infty} ({\bf I}_{i}- {\bf P}_{i})\right\}\to 0$ in probability as  $N\to\infty$.
Letting $N\to\infty$ in (\ref{ri}), we get 
 \[
{\mathbb E}\exp \left\{ \gamma_n a_n \sum_{i=1}^{\infty} ({\bf I}_{i}-{\bf P}_i)\right\} \leq 
\exp\left\{\frac{(\gamma_n a_n)^2 e^{\gamma_n  a_n}}{2 }  ({\mathbb E}R^{*}_{P(n_2),k}-{\mathbb E}R^{*}_{P(n_1),k})\right\}. 
\]

From Lemma~2, Lemma~1 and the definition of  $a_n$, we have, as $n\to\infty$,
\[
a^2_n|{\mathbb E} R^*_{P (n+t_n),k} -{\mathbb E} R^*_{P (n),k}| = o((\ln n)^{-2}). 
\]
   Let $\gamma_n=3 \ln n/ \eta$, then
\[
{\mathbb P}(a_n \left|Z^*_{n+t_n,k}-Z^*_{n,k}\right|\ge \eta ) 
= {\mathbb P}( a_n(Z^*_{n+t_n,k}-Z^*_{n,k})\ge \eta)+{\mathbb P}(a_n(Z^*_{n,k}- Z^*_{n+t_n,k})\ge \eta) 
\]
\[
\le 2 \exp\left\{\frac{(\gamma_n a_n)^2 e^{\gamma_n a_n}}{2 } |{\mathbb E} R^*_{P (n+t_n),k} -{\mathbb E} R^*_{P (n),k}| - t\eta \right\}
= 2 \exp\left\{ o(1) - 3\ln n \right\}\le \frac2{n^2}. 
 \]
By the Markov inequality, for any pair $\varepsilon>0$,  $\eta>0$, there is integer $n_0$ such that, for $n\ge n_0$,
\[
{\mathbb P}(\sup_{n\ge n_0} a_n \left|Z^*_{n+t_n,k}-Z^*_{n,k}\right| \ge \eta)
\le \sum_{n=n_0}^\infty {\mathbb P}(a_n \left|Z^*_{n+t_n,k}-Z^*_{n,k}\right|\ge \eta) 
 \le 
\sum_{n=n_0}^\infty \frac{2 }{n^2 }\le
\varepsilon.
\]

The second assertion of the lemma follows directly.
\end{proof}

Let
\[
t'_n=\left\{
\begin{array}{ll}
\sqrt{n\ln\ln n} (L^*(n))^{-1/4}, & \textrm{if} \ \  \theta=1, \ k=1; \\
\sqrt{n\ln\ln n} (L(n))^{-1/4}, & \textrm{if} \ \ \theta=1, \ k\ge 2; \\
\sqrt{n} \ln\ln n, & \textrm{if} \ \ \theta<1. 
\end{array}
\right.
\]
Introduce the satisfies by following constraints sequence $a'_n=a_n(k,\theta,t'_n)$:
\begin{equation}\label{a'}
a'_n=\left\{
\begin{array}{ll}
o( (n\ln \ln n)^{-\frac14} (L^*(n))^{-\frac38} (\ln n)^{-1}), & \textrm{if} \ \  \theta=1, \ k=1; \\
o( (n\ln \ln n)^{-\frac14} (L(n))^{-\frac38} (\ln n)^{-1}), & \textrm{if} \ \  \theta=1, \ k\ge 2; \\
o(\min \{ n^{\frac{1-2\theta}4} (L(n) \ln \ln n )^{-\frac12} , 1 \} (\ln n)^{-1}), & \textrm{if} \ \  \theta<1. 
\end{array}
\right.
\end{equation}

\it
Proof of Theorem 1.
\rm
\\
Clearly, the sequence $b_n$  in  (\ref{b_n})  satisfies  conditions (\ref{a'}).
By Proposition 1 and the LIL for $P(t)$, we have 
\[
\frac{P(n)-n}{P(n+t'_n)-P(n)}\xrightarrow[n\to\infty]{\textrm{a.s.}} 0 ,\ \ \  \textrm{and} \ \ \  \frac{P(n)-n}{P(n-t'_n)-P(n)}\xrightarrow[n\to\infty]{\textrm{a.s.}} 0.
\] 

By monotonicity of $P(t)$, for any $\varepsilon\in(0,1)$, there exists an integer 
$n_0$ such that
\begin{equation}\label{pois1}
{\mathbb P}( \forall n\ge n_0 \  \ \exists \delta_n \ : \  |\delta_n|\le 1,  \ P(n+\delta_nt'_n)=n) \stackrel{def}{=} {\mathbb P} (A(n_0))\ge1-\varepsilon/2.
\end{equation}

By Lemma~1 and Lemma~2, and using direct calculations, we may conclude that 
\begin{equation}\label{pois2}
b_n({\mathbb E} R^*_{P(n \pm t'_n),k}-{\mathbb E}R^*_{P(n),k})\to 0 \ \ \ \textrm{as $n\to\infty$.}
\end{equation}

Since $R^*_{n,k}=R^*_{P(n+\delta_n t'_n),k}$ a.s., if $P(n+\delta_n t'_n)=n$. Then given the event $A(n_0)$ occurs, we have, for any $n\ge n_0$ with probability one
$$
|R^*_{n,k}-R^*_{P(n),k}|\le \sup_{|\delta|\le 1}|R^*_{P(n+\delta t'_n),k}-R^*_{P(n),k}|
$$
$$
=\max(R^*_{P(n+t'_n),k}-R^*_{P(n),k},R^*_{P(n),k}-R^*_{P(n-t'_n),k}),
$$
due to monotonicity $P(t)$ in $t$ and $R^*_{n,k}$ in $n$.

So, by (\ref{pois1}), (\ref{pois2}) and Lemma~3, for any pair $\varepsilon>0$,  $\eta>0$, 
there exists an integer $n_0$ such that for $n\ge n_0$
\[
{\mathbb P}\left(\sup_{n\ge n_0} b_n |R^*_{n,k}-R^*_{P(n),k}|\ge \eta\right)
\le {\mathbb P}\left(\sup_{n\ge n_0} b_n|R^*_{n,k}-R^*_{P(n),k}|\ge \eta, A(n_0)\right)+\frac\varepsilon 2
\]
\[
\le {\mathbb P}\left(\sup_{n\ge n_0} b_n |Z^*_{n-t'_n,k}-Z^*_{n,k}|\ge \frac\eta2 \right)
+ {\mathbb P}\left(\sup_{n\ge n_0} b_n|Z^*_{n+t'_n,k}-Z^*_{n,k}|\ge \frac\eta2\right)+\frac \varepsilon 2  \le \varepsilon.
\]

The second assertion of the Theorem 1 follows directly.\\
{ \it Theorem 1 is proved.}

\section{Proof of Corollary 1}

 For any $n\ge1$, let random variables  $\{\xi_{n,i}\}_{i\ge1}$  be mutually independent with  
${\mathbb E}\xi_{n,i}=0$ for $i\ge1$. Let 
$S_{n,N}=\sum\limits_{i=1}^N \xi_{n,i}$, $s_{n,N}^2=\sum\limits_{i=1}^N {\mathbb E}\xi^2_{n,i}$
for $N\ge 1$, and $S_n=S_{n,\infty}, \ s^2_n=s^2_{n,\infty}>0$. 
Analogously to Lemma~1 in Sung (1996) we prove the following lemma for any dependence between strings. 
The proofs of the following lemmas may be found in Appendix.

\begin{Lemma}
Let $c_n$ be a sequence of positive constants such that  $c_n\to 0$ as $n\to\infty$. Let 
$
|\xi_{n,i}|\le c_n s_n/\sqrt{\ln n}
$
a.s. for all $n, i \ge 1$. Let the sequence $\{ e^{C S_{n,N}}\}_{N=1}^\infty$ be uniformly integrable for any fixed $n\ge1$ and $C>0$, and $s^2_n<\infty$. 
Then
\[
{\mathbb P}\left( \limsup\limits_{n\to\infty}\frac{S_n}{\sqrt{2 s_n^2 \ln n }}\le1 \right)=1.
\]
\end{Lemma}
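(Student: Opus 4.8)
The plan is to run a Borel--Cantelli argument built on an exponential (Chernoff-type) tail bound, reducing the almost sure upper bound to a one-sided probability estimate at each fixed level $1+\varepsilon$. Fix $\varepsilon>0$ and set $x_n=(1+\varepsilon)\sqrt{2s_n^2\ln n}=(1+\varepsilon)s_n\sqrt{2\ln n}$. If I can show $\sum_n{\mathbb P}(S_n\ge x_n)<\infty$, then Borel--Cantelli gives $\limsup_{n\to\infty}S_n/\sqrt{2s_n^2\ln n}\le 1+\varepsilon$ almost surely; intersecting the corresponding probability-one events over $\varepsilon=1/m$, $m\in{\mathbb N}$, yields the lemma. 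Note that arbitrary dependence between different rows $n$ is irrelevant here, since Borel--Cantelli only sees the marginal tail probabilities.

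For the tail estimate I use the exponential Markov inequality ${\mathbb P}(S_n\ge x_n)\le e^{-\lambda x_n}{\mathbb E}e^{\lambda S_n}$ for $\lambda>0$, so it remains to bound ${\mathbb E}e^{\lambda S_n}$. Here I proceed exactly as in the proof of Lemma~3: from $e^u\le 1+u+\tfrac{u^2}{2}e^{|u|}$, the a.s.\ bound $|\xi_{n,i}|\le c_n s_n/\sqrt{\ln n}$ and ${\mathbb E}\xi_{n,i}=0$, I get, for $\lambda>0$,
\[
{\mathbb E}e^{\lambda\xi_{n,i}}\le 1+\tfrac{\lambda^2}{2}e^{\lambda c_n s_n/\sqrt{\ln n}}\,{\mathbb E}\xi_{n,i}^2\le\exp\Bigl\{\tfrac{\lambda^2}{2}e^{\lambda c_n s_n/\sqrt{\ln n}}\,{\mathbb E}\xi_{n,i}^2\Bigr\}.
\]
Multiplying over $i\le N$ (within-row independence) and letting $N\to\infty$, I obtain
\[
{\mathbb E}e^{\lambda S_n}\le\exp\Bigl\{\tfrac{\lambda^2}{2}e^{\lambda c_n s_n/\sqrt{\ln n}}\,s_n^2\Bigr\}.
\]
The passage $N\to\infty$ is where the hypotheses enter: $s_n^2<\infty$ forces $S_{n,N}\to S_n$ a.s.\ (Kolmogorov's convergence theorem), and the assumed uniform integrability of $\{e^{CS_{n,N}}\}_N$ then gives ${\mathbb E}e^{\lambda S_n}=\lim_N{\mathbb E}e^{\lambda S_{n,N}}=\lim_N\prod_{i\le N}{\mathbb E}e^{\lambda\xi_{n,i}}$, in particular finiteness of ${\mathbb E}e^{\lambda S_n}$.

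Now choose $\lambda=\lambda_n=(1+\varepsilon)\sqrt{2\ln n}/s_n$. Then $\lambda_n x_n=2(1+\varepsilon)^2\ln n$, $\tfrac{\lambda_n^2}{2}s_n^2=(1+\varepsilon)^2\ln n$, and the perturbation factor is $e^{\lambda_n c_n s_n/\sqrt{\ln n}}=e^{\sqrt2(1+\varepsilon)c_n}\to1$ since $c_n\to0$. Hence
\[
{\mathbb P}(S_n\ge x_n)\le\exp\bigl\{(1+\varepsilon)^2\bigl(e^{\sqrt2(1+\varepsilon)c_n}-2\bigr)\ln n\bigr\},
\]
and as $e^{\sqrt2(1+\varepsilon)c_n}-2\to-1$, for all large $n$ the exponent is at most $-\rho\ln n$ with a fixed $\rho>1$ (any $\rho<(1+\varepsilon)^2$, e.g.\ $\rho=1+\varepsilon$, works). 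Thus ${\mathbb P}(S_n\ge x_n)\le n^{-\rho}$ eventually, the series converges, Borel--Cantelli applies, and letting $\varepsilon\downarrow0$ along $\varepsilon=1/m$ completes the proof.

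The one genuinely delicate point is the interchange of limit and product inside the exponential moment, i.e.\ knowing that ${\mathbb E}e^{\lambda S_n}$ is finite and equals the limit of the finite products; this is exactly what the uniform-integrability assumption (together with $s_n^2<\infty$) is designed to supply, in the same way it was used in the proof of Lemma~3. Everything else is the standard Chernoff optimization, and the slack $(1+\varepsilon)^2>1$ combined with $c_n\to0$ is what produces a summable tail.
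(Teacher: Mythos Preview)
Your proof is correct and essentially identical to the paper's: both reduce to Borel--Cantelli via the Chernoff bound, use the inequality $e^u\le 1+u+\tfrac{u^2}{2}e^{|u|}$ together with the a.s.\ bound on $|\xi_{n,i}|$ to control the finite-$N$ moment generating function, pass to $N=\infty$ using $s_n^2<\infty$ and the uniform integrability hypothesis, and then pick the Chernoff parameter $\lambda_n=(1+\varepsilon)\sqrt{2\ln n}/s_n$ (equivalently the paper's $t=2(1+\varepsilon)\ln n$) to obtain a tail of order $n^{-(1+\varepsilon)}$. The only cosmetic difference is that you invoke Kolmogorov's series theorem for a.s.\ convergence of $S_{n,N}\to S_n$, whereas the paper uses Chebyshev for convergence in probability; either suffices once combined with uniform integrability.
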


\begin{Lemma}

Let $d_n$ be a sequence of positive constants such that, $d_{[c n]}/d_n>\varepsilon(c)>0$ for any $c>0, \ n\ge n_0$.
 Then the conditions $\min\limits_{1\le k\le k_0}(B^*_{n,k},B_{n,k})/d_n \to \infty$ and ${\mathbb E} R_{P(n),k_0}/d_n\to\infty$ 
as $n\to\infty,$  are equivalent.
\end{Lemma}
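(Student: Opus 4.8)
The approach is to rewrite each of the three families of quantities as a series over the cells and then compare the summands pointwise. Write, for $x>0$ and $j\ge1$, $g_j(x)=e^{-x}x^j/j!$ for the $\mathrm{Poisson}(x)$ mass at $j$ and $\overline G_j(x)=\sum_{m\ge j}g_m(x)={\mathbb P}(\mathrm{Poisson}(x)\ge j)$. By the independence of the processes $P_i$,
\[
{\mathbb E}R_{P(t),j}=\sum_{i\ge1}g_j(tp_i),\qquad B_{n,k}=\sum_{i\ge1}g_k(np_i)(1-g_k(np_i)),\qquad B^*_{n,k}=\sum_{i\ge1}\overline G_k(np_i)(1-\overline G_k(np_i)).
\]
One direction is then immediate: since the variance of an indicator never exceeds its mean, $B_{n,k_0}\le\sum_{i\ge1}g_{k_0}(np_i)={\mathbb E}R_{P(n),k_0}$, so if $\min_{1\le k\le k_0}(B^*_{n,k},B_{n,k})/d_n\to\infty$ then in particular $B_{n,k_0}/d_n\to\infty$, hence ${\mathbb E}R_{P(n),k_0}/d_n\to\infty$.

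For the converse I would first prove, for every $1\le k\le k_0$, the two pointwise estimates
\[
g_k(x)(1-g_k(x))\ \ge\ c_{k,k_0}\,g_{k_0}(2x),\qquad \overline G_k(x)(1-\overline G_k(x))\ \ge\ c_{k,k_0}\,g_{k_0}(2x)\qquad(x>0),
\]
with a constant $c_{k,k_0}>0$. In each case one looks at the ratio of the two sides: it is continuous and strictly positive on $(0,\infty)$ (every probability appearing is in $(0,1)$ there); as $x\downarrow0$ both left-hand sides are comparable to $x^{k}$ while $g_{k_0}(2x)$ is comparable to $x^{k_0}$, so because $k\le k_0$ the ratio stays bounded away from $0$; and as $x\to\infty$ the factor $e^{-2x}$ in $g_{k_0}(2x)$ decays strictly faster than the $e^{-x}$ tails of $g_k(1-g_k)$ and of $\overline G_k(1-\overline G_k)$ (recall $1-\overline G_k(x)\sim g_{k-1}(x)$), so the ratio tends to $+\infty$. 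Hence the ratio has a positive infimum over $(0,\infty)$. With $c:=\min_{1\le k\le k_0}c_{k,k_0}>0$, substituting $x=np_i$ and summing over $i$ gives, for every $1\le k\le k_0$,
\[
B_{n,k}\ \ge\ c\sum_{i\ge1}g_{k_0}(2np_i)\ =\ c\,{\mathbb E}R_{P(2n),k_0},\qquad B^*_{n,k}\ \ge\ c\,{\mathbb E}R_{P(2n),k_0}.
\]
Finally, from ${\mathbb E}R_{P(n),k_0}/d_n\to\infty$ and $d_{2n}/d_n\ge\varepsilon(2)>0$ one gets ${\mathbb E}R_{P(2n),k_0}/d_n\to\infty$ (restricting to even indices and using $d_n\le\varepsilon(2)^{-1}d_{2n}$), so the last two displays yield $B_{n,k}/d_n\to\infty$ and $B^*_{n,k}/d_n\to\infty$ for all $1\le k\le k_0$, which is precisely the first condition of the lemma.

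The only substantive point is the pair of pointwise inequalities, and what has to be right there is the dilation factor: it must be strictly greater than $1$, so that the tail $e^{-2x}x^{k_0}$ of $g_{k_0}(2x)$ overwhelms the $e^{-x}$ tails of the two variance weights as $x\to\infty$ — the factor $1$ fails at infinity — while $k\le k_0$ is exactly what makes the ratios harmless at the origin; choosing the factor to be the integer $2$ is convenient because it keeps the argument $2n$ an integer, so that the assumed quasi-homogeneity of $d_n$ applies directly. The rest is bookkeeping.
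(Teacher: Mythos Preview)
Your argument is correct. Both directions work: the easy direction via $B_{n,k_0}\le{\mathbb E}R_{P(n),k_0}$ is the same as in the paper, and your pointwise bounds $g_k(x)(1-g_k(x))\ge c\,g_{k_0}(2x)$ and $\overline G_k(x)(1-\overline G_k(x))\ge c\,g_{k_0}(2x)$ are valid, since the ratios are continuous and positive on $(0,\infty)$, behave like $x^{k-k_0}$ near $0$ (harmless because $k\le k_0$), and blow up like $e^{x}$ at infinity. The final transfer from ${\mathbb E}R_{P(n),k_0}/d_n\to\infty$ to ${\mathbb E}R_{P(2n),k_0}/d_n\to\infty$ via $d_n\le d_{2n}/\varepsilon(2)$ is fine.

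The paper takes a somewhat different route. For $B^*_{n,k}$ it uses the explicit inequality $(1-\overline G_k(x))\,\overline G_k(x)\ge g_0(x)\,g_k(x)=2^{-k}g_k(2x)$, giving $B^*_{n,k}\ge 2^{-k}{\mathbb E}R_{P(2n),k}$; for $B_{n,k}$ it quotes Barbour--Gnedin (2009) for $B_{n,k}\ge c_k\,{\mathbb E}R_{P(n),k}$; and then it invokes another Barbour--Gnedin inequality ${\mathbb E}R_{P(n),k}\ge C_{k_0,k}\,{\mathbb E}R_{P(2n),k_0}$ to step down from level $k$ to level $k_0$. So the paper proceeds in two stages through ${\mathbb E}R_{P(\cdot),k}$ and leans on external results, whereas you go directly from the variance summands to $g_{k_0}(2x)$ with a single compactness argument. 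Your version is self-contained and conceptually cleaner; the paper's version yields explicit constants (e.g.\ $2^{-k}$) and ties into known inequalities in the literature.
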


\it
Proof of Corollary 1.
\rm

By Theorem 1, it is enough to prove similar assertions for $Z^*_{n,k}$ and  $Z_{n,k}$ (with normalization $o((\ln n)^{-1})$).
We use Lemma~4. Let, for $k\in \{1, ... , k_0\}$, 
\[
 \xi^*_{n,i}=\pm({\mathbb I}(J_i(P(n))\ge k) - {\mathbb P}(J_i(P(n))\ge k)), 
\]
\[
\xi_{n,i}=\pm({\mathbb I}(J_i(P(n))= k) - {\mathbb P}(J_i(P(n))= k)).
\]
Then, for $n,i\ge 1$, 
\[
(s^*_n)^2=B^*_{n,k}, \ \  s_n^2=B_{n,k}, \ \  \textrm{and} \ \ |\xi^*_{n,i}|\le1, \ \ |\xi_{n,i}|\le1. 
\]   
As $R_{P(n),k}\le R^*_{P(n),k}\le P(n)$ a.s., and the variance of an indicator random variable is not bigger than its expectation,
we have 
\[
(s^*_n)^2\le {\mathbb E}R^*_{P(n),k}\le {\mathbb E} P(n) = n.
\]
 Similarly, we get $s^2_n\le n$. For any fixed $n$ and $C>0$, we have
\[
{\mathbb E} e^{C S^*_{n,N}}\le {\mathbb E} e^{C (P(n)+n)}=\exp\{ n(e^C+C-1)\}<\infty.
\]
Then sequence $\{ e^{C S^*_{n,N}}\}_{N=1}^\infty$ is uniformly integrable. The same holds for the sequence  $\{ e^{C S_{n,N}}\}_{N=1}^\infty$.

Let 
\[
c_n^*= \left(\frac{\ln n}{ B^*_{n,k} }\right)^{1/4} \ \  \textrm{ and } \ \  c_n=\left(\frac{\ln n}{ B_{n,k} }\right)^{1/4},
\]
 then, by Lemma~5, 
\[
|\xi^*_{n,i}|\le \frac{c_n^* s^*_n}{\sqrt{\ln n}} \ \ \textrm{ and } \ \  |\xi_{n,i}|\le \frac{c_n s_n}{\sqrt{\ln n}} \ \ \textrm{ a.s. }
\] 
Then the required result follows from Lemma 4. 
\\
{ \it Corollary 1 is proved.}

\section*{Appendix}

\it
Proof of Lemma~1.
\rm

Since $R^{*}_{P(t),k}=R_{P(t)} - \sum\limits_{i=1}^{k-1} R_{P(t),i}$ where the sum is finite and the asymptotics for each term is known, the asymptotics for ${\mathbb E}R^{*}_{P(t),k}$  follows from formula  (23) in Karlin (1967) if $\theta>0$.
Let us analyse the asymptotic behaviour of ${\mathbb E}R^{*}_{P(t),k}$ as $t\to\infty$, when $\theta=0$. Note that, for $i\ge1, \ t>0$,
$$
{\mathbb E}R_{P(t),i}=\sum_{j=1}^\infty \frac{(tp_j)^i}{i!}e^{-tp_j}=
\frac1{i!} \int_0^\infty \frac{t^i}{x^i}e^{-t/x}d\alpha(x)
$$
$$
 = 
\frac1{i!} \int_0^\infty \left(\frac{i t^i}{x^{i+1}}- \frac{ t^{i+1}}{x^{i+2}} \right)e^{-t/x}\alpha(x) dx = 
\frac1{i!} \int_0^\infty \left(\frac{i }{y^{i+1}}- \frac{ 1}{y^{i+2}} \right)e^{-1/y}\alpha(ty) dy.
$$

So $\frac{{\mathbb E}R_{P(t),i}}{\alpha(t)}\xrightarrow[t\to\infty]{} 0$ if $\theta=0$.
Since ${\mathbb E}R_{P(t)}\sim \alpha(t)$ as $t\to\infty$, we obtain the required result. 
The variance of $R^{*}_{P(t),k}$ for $\theta\in(0,1]$ may be found by
\[
{\mathbb Var} (R^{*}_{P(t),k}) 
=
\sum_{i=1}^{\infty} {\mathbb P} (J_i(P(t))\geq k)(1-{\mathbb P} (J_i(P(t))\geq k))
\]
\[
=
\sum_{i=1}^{\infty} {\mathbb P} (P(tp_i)< k)(1-{\mathbb P} (P(tp_i)< k))
=
\sum_{i=1}^{\infty} \sum_{s=0}^{k-1} \frac{(t p_i)^s}{s!}e^{-t p_i} \left(1- \sum_{m=0}^{k-1} \frac{(t p_i)^m}{m!}e^{-t p_i}\right)
\]
\[
=
\int_0^{\infty} \sum_{s=0}^{k-1} \frac{t^s x^{-s}}{s!}e^{- t/x} 
\left(1-\sum_{m=0}^{k-1}\frac{t^m x^{-m}}{m!}e^{- t/x} \right) d\alpha(x).
\]

We use integration by parts and decomposition into two integrals:
\[
{\mathbb Var} (R^{*}_{P(t),k}) =
\int_0^{\infty} \sum_{s=0}^{k-1} \frac{t^s }{s!} (s x^{-s-1}-t x^{-s-2})
e^{-t/x} 
\alpha(x) dx
\]
\[
-
\int_0^{\infty} \sum_{s=0}^{k-1}\sum_{m=0}^{k-1} \frac{t^{s+m} }{s! m!} ((s+m) x^{-s-m-1}-2t x^{-s-m-2})
e^{-2t/x} 
\alpha(x) dx.
\]

Let substitute  $y=x/t$ in the first integral, and $y=x/(2t)$ in the second integral:
\[
{\mathbb Var} (R^{*}_{P(t),k}) =
\sum_{s=0}^{k-1} \frac{1 }{s!}  \int_0^{\infty} (s y^{-s-1}- y^{-s-2})
e^{-1/y} \alpha(t y) dy
\]
\[
-
\sum_{s=0}^{k-1}\sum_{m=0}^{k-1} \frac{2^{-s-m} }{s! m!} 
\int_0^{\infty} ((s+m) y^{-s-m-1}-y^{-s-m-2}) e^{-1/y} \alpha(2ty) dy
\]
\[
=-\frac1{(k-1)!}\int_0^{\infty} y^{-k-1} e^{-1/y} \alpha(t y) dy 
+\int_0^{\infty} y^{-3} e^{-1/y} \alpha(2t y) dy
\]
\[
-\sum_{s=0}^{k-1}\sum_{m=0}^{k-1}\frac{{\mathbb I}\{ s+m\ge2 \}}{2^{s+m} s! m!} \int_0^{\infty} \left((s+m) y^{-s-m-1}-y^{-s-m-2}\right ) e^{-1/y} \alpha(2ty) dy.
\]

Since $\alpha(x)=x^{\theta}L(x)$, for any integer $r\geq 0$ and as $t\to \infty$,
\[
\int_0^{\infty} y^{-r-2}e^{-1/y} 
\alpha(ty) dy \sim 
\alpha(t) \int_0^{\infty} y^{\theta-r-2}e^{-1/y} 
dy =\alpha(t)\Gamma(r+1-\theta).
\]

Note that, for any integer $r\geq 2$,
\[
 \int_0^{\infty}(r  y^{\theta-r-1}- y^{\theta-r-2})e^{-1/y} dy=\theta \Gamma(r-\theta).
\]
{ \it Lemma~1 is proved.}
\\
\\
\it
Proof of Lemma~2.
\rm
\[
|{\mathbb E} R^*_{P (n+ t_n),k} - {\mathbb E} R^*_{P (n),k}|=
{\mathbb E} R^*_{P (n),k} \left|\frac{{\mathbb E} R^*_{P (n+ t_n),k}}{{\mathbb E} R^*_{P (n),k}} - 1\right|.
\]
By Lemma 1 and Karamata representation for slowly varying functions (Theorem 1.3.1,  in Bingham, Goldie, Teugels (1989)), 
we have that, for $\theta\in[0,1]$, and  $n\to\infty$
\[
\frac{{\mathbb E} R^*_{P (n+ t_n),k}}{{\mathbb E} R^*_{P (n),k}} \sim  \left(1+\frac{t_n}{n}\right)^{\theta} e^{ o\left(\ln\left(1+\frac{t_n}{n}\right)\right)}=
 \left(1+\frac{t_n}{n}\right)^{\theta+o(1)}
=1+ \frac{t_n}{n}(\theta+o(1)).
\]
{ \it Lemma~2 is proved.}
\\
\\
\it
Proof of Lemma~4.
\rm
\\
Let  $l_n=\sqrt{2 s_n^2 \ln n }$ for $ n\ge 1$. Then, by Borel-Cantelli lemma, it suffices to show that, for any $ \varepsilon> 0 $, 
\[
\sum\limits_{n=1}^\infty {\mathbb P}(S_n/l_n>1+\varepsilon)<\infty.
\] 

From inequalities $e^x\le 1+x+\frac{x^2}2 e^{|x|}$ for all $x\in {\mathbb R}$ and $|\xi_{n,i}|/l_n\le c_n/(\sqrt{2} \ln n)$ a.s., we have, for $t>0$,
\[
{\mathbb E} e^{t \xi_{n,i}/l_n}\le 1+ \frac{t^2}{2 l_n^2} e^{t c_n/(\sqrt{2} \ln n)} {\mathbb E} \xi^2_{n,i}\le
 \exp\left\{\frac{t^2}{2 l_n^2} e^{t c_n/(\sqrt{2} \ln n)} {\mathbb E} \xi^2_{n,i}\right\}. 
\]

Since $\{ {\bf \xi_{n,i}}_i\}_{i=1}^\infty$ are mutually independent, we have, for any  $N\ge 1$, 
\begin{equation}\label{RI}
{\mathbb E} e^{t S_{n,N}/b_n}=\prod_{i=1}^N {\mathbb E} e^{t \xi_{n,i}/b_n} \le \exp\left\{\frac{t^2 s^2_{n,N}}{2 b_n^2} e^{t c_n/(\sqrt{2} \ln n)}\right\}. 
\end{equation}

 Let $t=2(1+\varepsilon)\ln n$ and $C=t/b_n$ for any fixed $n\ge1$. Since, we apply the $s^2_n<\infty$ then, by the Chebyshev inequality, we have, for any $\varepsilon>0$,
\[
{\mathbb P} (|S_{n,N}-S_n|>\varepsilon)\le \sum_{i=N+1}^\infty {\mathbb E} \xi^2_{n,N}/\varepsilon^2\to 0,
\]
that is, $S_{n,N}-S_n\to 0$ in probability as  $N\to\infty$.
It follows from the condition of the lemma that sequence $\{ e^{C S_{n,N}}\}_{N=1}^\infty$ is uniformly integrable and 
$e^{C S_{n,N}}-e^{C S_n}\xrightarrow[N\to\infty]{\textrm{p}} 0$  (as $e^{Cx}$ is a continuous function).
Letting $N\to\infty$ in (\ref{RI}), we get 
 \[
\lim_{N\to\infty} {\mathbb E} \exp\{C S_{n,N}\}={\mathbb E} \exp\{C S_n\}
\le \lim_{N\to\infty} \exp\left\{\frac{C^2 s^2_{n,N}}{2} e^{\sqrt{2}(1+\varepsilon)c_n}\right\}
\]
\[
= \exp\left\{\frac{C^2 s^2_n}{2} e^{\sqrt{2}(1+\varepsilon)c_n}\right\}=\exp\left\{\frac{t^2 }{4 \ln n} e^{\sqrt{2}(1+\varepsilon)c_n}\right\}.
\]

Since $c_n\to0$ as $n\to\infty$, for any $\varepsilon>0$, there exists $n_0\ge 1$ such that 
\[
(1+\varepsilon)^2(2-e^{\sqrt{2}(1+\varepsilon)c_n})>1+\varepsilon
\]
 as $n\ge n_0$. Then, by the Markov inequality, as $n\ge n_0$,
\[
{\mathbb P}(S_n/b_n>1+\varepsilon)\le e^{-t(1+\varepsilon)}{\mathbb E} \exp\{C S_n\}
\le \exp\left\{ -t(1+\varepsilon)+\frac{t^2}{4 \ln n} e^{\sqrt{2}(1+\varepsilon)c_n}\right\}
\]
\[
=\exp\{ - (1+\varepsilon)^2(2- e^{\sqrt{2}(1+\varepsilon)c_n}) \ln n \}\le1/n^{1+\varepsilon}.
\]
\\
{ \it Lemma~4 is proved.}
\\
\\
\it
Proof of Lemma~5.
\rm
\\
Note that 
\[
B^*_{n,k}=\sum_{i=1}^\infty {\mathbb P} (P(np_i)< k)(1-{\mathbb P} (P(np_i)< k))
\]
\[
\ge
\sum_{i=1}^\infty {\mathbb P} (P(np_i)=0) {\mathbb P} (P(np_i)= k)=\sum_{i=1}^\infty \frac{(n p_i)^k}{k!}e^{-2 n p_i}
=\frac1{2^k}{\mathbb E} R_{P(2n),k}.
\]
From  Barbour and Gnedin (2009, formulas (4.1), (4.2), (4.4)), there exist positive constants $c_{k}$ and $C_{k_0,k}$
 such that
${\mathbb E} R_{P(n),k}>B_{n,k}>c_{k}{\mathbb E} R_{P(n),k}$ and, for all $k<k_0$, the inequality 
${\mathbb E} R_{P(n),k}\ge C_{k_0,k} {\mathbb E} R_{P(2n),k_0}$ holds. 
From Proposition~3.2  in Ben-Hamou, Boucheron, and Ohannessian (2017), we have
$B^*_{n,k_0}\le k_0\cdot {\mathbb E} R_{P(n),k_0}$. Then
\[
{\mathbb E} R_{P(n),k_0}>B_{n,k_0}\ge \min\limits_{1\le k\le k_0}(B^*_{n,k},B_{n,k})\ge
 \min\limits_{1\le k\le k_0}(\frac1{2^k}{\mathbb E} R_{P(2n),k},c_{k}{\mathbb E} R_{P(n),k})
\]
\[
\ge
 \min\limits_{1\le k\le k_0}(\frac{C_{k_0,k}}{2^k}{\mathbb E} R_{P(4n),k_0},c_{k}C_{k_0,k}{\mathbb E} R_{P(2n),k_0}).
\]

Since $d_{[cn]}/d_n\ge \varepsilon(c)$ for $n\ge n_0$,
\[
\frac{{\mathbb E} R_{P([cn]),k_0}}{d_n}=\frac{{\mathbb E} R_{P([cn]),k_0}}{d_{[cn]}}\cdot \frac{d_{[cn]}}{d_n}\xrightarrow[n\to\infty]{}\infty \ 
\Leftrightarrow \ \frac{{\mathbb E} R_{P(n),k_0}}{d_n}\xrightarrow[n\to\infty]{}\infty.
\]
\\
{ \it Lemma~5 is proved.}
\\

{\bf Acknowledgements}
The research was supported by RFBR grant 17-01-00683. 
The author would like to thank Sergey Foss and Artyom Kovalevskii  
for their constant attention to the work.
\\
\\
\bigskip
\footnotesize

{\sc Barbour, A. D.}, 2009. Univariate approximations in the infinite occupancy
scheme. Alea 6, 415--433.

{\sc Barbour, A. D.,  Gnedin, A. V.}, 2009.
Small counts in the infinite occupancy scheme.
Electronic Journal of Probability, 
Vol. 14, Paper no. 13, 365--384.

{\sc Ben-Hamou, A., Boucheron, S., Ohannessian, M. I.}, 2017.
Concentration inequalities in the infinite urn scheme for occupancy counts and the missing mass, with applications.
Bernoulli, V. 23, 249--287.

{\sc Bingham, N. H., Goldie, C. M., Teugels, J. L.}, 1989. Regular variation, Cambridge University Press.

{\sc Chebunin, M. G.}, 2014. Estimation of parameters of probabilistic models which is based on the number of different elements 
in a sample. Sib. Zh. Ind. Mat., 17:3, 135--147 (in Russian).

{\sc Chebunin, M., Kovalevskii, A.}, 2016. 
Functional central limit theorems for certain statistics in an infinite urn scheme. 
Statistics and Probability Letters, V. 119, 344--348.

{\sc Dutko, M.}, 1989.
Central limit theorems for infinite urn models, Ann. Probab. 17,
1255--1263.

{\sc Gnedin, A., Hansen, B., Pitman, J.}, 2007.
Notes on the occupancy problem with
infinitely many boxes: general
asymptotics and power laws.
Probability Surveys,
Vol. 4, 146--171.

{\sc Hoffmann, J., Miao, Y., Li, X. C., Xu, S. F.}, 2016.  Kolmogorov Type Law of the Logarithm for Arrays. 
Journal of Theoretical Probability, Volume 29, Issue 1, 32--47.

{\sc Hwang, H.-K., Janson, S.}, 2008. Local Limit Theorems for Finite and Infinite 
Urn Models. The Annals of Probability, Vol. 36, No. 3,  992--1022.

{\sc Karlin, S.}, 1967. Central Limit Theorems for Certain Infinite Urn Schemes. 
Jounal of Mathematics and Mechanics, Vol. 17, No. 4,  373--401.

{\sc Sung, S. H.}, 1996.  An analogue of Kolmogorov's law of the iterated logarithm for arrays. Bull. Aust. Math.
Soc., Vol. 54, No. 2, 177--182.

{\sc Zakrevskaya, N. S.,  Kovalevskii, A. P.}, 2001. One-parameter probabilistic models of text statistics. 
Sib. Zh. Ind. Mat., 4:2, 142--153 (in Russian).

\end{document}